\newtheorem{definition}{Definition}
\newtheorem{proposition}{Proposition}
\newenvironment{proof}{{\bf Proof:}}{$\text{ }\blacksquare$}
\begin{document}
\title{What is the higher-dimensional infinitesimal groupoid of a manifold?}
\author{Dennis Borisov\\ Department of Mathematics, Yale University\\ dennis.borisov@gmail.com}
\date{\today}
\maketitle

\begin{abstract}
The construction (by Kapranov) of the space of infinitesimal paths on a manifold is extended to include higher dimensional infinitesimal objects, encoding contractions of infinitesimal loops. This full infinitesimal groupoid is shown to have the algebra of polyvector fields as its non-linear cohomology.
\end{abstract}

What is the infinitesimal version of the fundamental groupoid of a manifold $\mathcal M$? The standard answer is that it is the Lie-Rinehart algebra (also called Lie algebroid) of vector fields on $\mathcal M$. However, this answer is not precise.

\smallskip

Let $\alpha,\beta$ be two vector fields on $\mathcal M$, and assume for simplicity that they commute, i.e. $[\alpha,\beta]=0$. The left hand side of this equation represents a loop, while the right hand side stands for the constant path. By equating the two sides we actually contract a loop. 

In \cite{Ka07} another Lie-Rinehart algebra was constructed for each $\mathcal M$. Starting with the space $\mathfrak X^1$ of vector fields, one builds the free Lie-Rinehart algebra $\mathcal R(\mathfrak X^1)$, generated by $\mathfrak X^1$. The action of $\mathcal R(\mathfrak X^1)$ on functions on $\mathcal M$ is generated by the action of $\mathfrak X^1$, but otherwise the Lie bracket is free. 

The Lie bracket being free means that one doesn't contract non-degenerate loops (we still have $\llbracket\alpha,\alpha\rrbracket=\llbracket\alpha,0\rrbracket=0$), and therefore one can call $\mathcal R(\mathfrak X^1)$ {\bf the space of infinitesimal paths}. This is a module over the algebra $\mathfrak X^0$ of functions on $\mathcal M$, and it defines a vector bundle on $\mathcal M$, that we will denote by $\mathcal RT\mathcal M$. Unless $dim(\mathcal M)\leq 1$, $\mathcal RT\mathcal M$ is obviously infinite dimensional.

\smallskip

While $\mathfrak X^1$ is obtained by contracting all loops, $\mathcal R(\mathfrak X^1)$ is built by avoiding contractions. In this paper we add higher dimensional components to $\mathcal R(\mathfrak X^1)$, that represent higher dimensional submanifolds, needed to parametrize contractions. We obtain {\bf the full infinitesimal groupoid} $\mathbb X^*$, graded by the dimension of submanifolds. 

The algebraic structure on $\mathbb X^*$ is considerably more complicated than that of a Lie-Rinehart algebra. In particular there are the {\bf homotopy maps} 
	\begin{equation}\mathbb X^k\rightarrow\mathbb X^{k-1},\end{equation}
that represent contractions of loops. For example: $\mathbb X^1=\mathcal R(\mathfrak X^1)$, but $\mathbb X^1$ modulo the image of $\mathbb X^2\rightarrow\mathbb X^1$ is just $\mathfrak X^1$.

We also consider the ``non-linear cohomology'' $\mathcal H(\mathbb X^*)$ of $\mathbb X^*$ (for $k>1$ the set $\mathbb X^k$ is not additive), i.e. homotopy classes of elements of $\mathbb X^*$, that themselves do not define non-trivial equivalence relations. We obtain that $\mathcal H(\mathbb X^*)$ is the algebra of polyvector fields $\underset{k=1}{\overset{-\infty}\Sigma}\wedge^{-k+1}\mathfrak X^1$. It is important to note that this algebra is not Gerstenhaber, since the Lie bracket has degree $0$, while the wedge product is of degree $-1$.\footnote{We use the cohomological notation, i.e. differentials raise degrees.}

\smallskip

The idea of construction of $\mathbb X^*$ is as follows. A loop like $[\alpha,\beta]$ is contracted by a $2$-morphism, i.e. it happens inside a jet of a submanifold of dimension $2$. While $\alpha,\beta$ are given as $1$-jets, it is not enough to take the $1$-jet of the surface, indeed, if $\alpha,\beta$ are given as morphisms 
	$$\alpha:\mathfrak X^0\rightarrow\mathfrak X^0\otimes\mathbb R[\epsilon_1]/(\epsilon_1^2),\quad\beta:\mathfrak X^0\rightarrow\mathfrak X^0\otimes\mathbb R[\epsilon_2]/(\epsilon_2^2),$$ 
their bracket requires $\epsilon_1\epsilon_2$, and hence we need a morphism 
	$$\mathfrak X^0\rightarrow\mathfrak X^0\otimes\mathbb R[\epsilon_1,\epsilon_2]/(\epsilon_1^2,\epsilon_2^2).$$
In other words we need to consider sections of the second tangent bundle $T^2\mathcal M$. If $\nu\in\mathfrak X^2$ is such a section, it has $\alpha,\beta$ as its two projections to $\mathfrak X^1$, and it defines a homotopy relation in $\mathbb X^1$ by equating $[\alpha,\beta]=\llbracket\alpha,\beta\rrbracket$, where $\llbracket-,-\rrbracket$ is the free bracket on $\mathbb X^1=\mathcal R(\mathfrak X^1)$.

\smallskip

To understand how $\nu$ provides the identification $[\alpha,\beta]=\llbracket\alpha,\beta\rrbracket$ one should note the two ways to view $\nu$ as a one-parameter family of vector fields on $\mathcal M$. On one hand $\nu$ is tangent to $\alpha$, on the other it is tangent to $\beta$. Choosing one of the ways is equivalent to choosing an order on the pair $\{\alpha,\beta\}$, i.e. choosing an orientation on $\nu$.

The symmetric group $\mathbb S_2$ acts on the set of these choices, and if $\sigma\in\mathbb S_2$ is the non-trivial element, its action is well known to produce $[\alpha,\beta]$, indeed, let $\alpha_*(\beta),\beta_*(\alpha)$ be the sections of $T^2\mathcal M$, obtained by using functoriality of $T$. Then 
	\begin{equation}\alpha_*(\beta)-\beta_*(\alpha)=[\alpha,\beta],\end{equation}
where on the left hand side we use the {\it strong difference} of points in $T^2\mathcal M$ (\cite{KL84}, \cite{MR91}), and on the right hand side we use identification of points in $T\mathcal M$ with tangents to the fibers of $T\mathcal M\rightarrow\mathcal M$.

Having two brackets $[-,-]$ and $\llbracket-,-\rrbracket$ on $\mathcal RT\mathcal M$ we have two actions of $\mathbb S_2$ on $T(\mathcal RT\mathcal M)$, and taking their strong difference we obtain $[\alpha,\beta]-\llbracket\alpha,\beta\rrbracket$.

\smallskip 

There are many sections of $T^2\mathcal M$, that have $\alpha,\beta$ as their projections to $\mathfrak X^1$, and therefore there are many different ways to contract the loop $[\alpha,\beta]$. To equate the different ways we need to consider jets of submanifolds of dimension $3$, again these jets should be $3$-jets of a particular kind, i.e. maps to $\mathbb R[\epsilon_1,\epsilon_2,\epsilon_3]/(\epsilon_i^2)$. So we need sections of $T^3\mathcal M$.

Since the combinatorics of $\{T^k\mathcal M\}$ is not globular but cubical, a section $\mu:\mathcal M\rightarrow T^3\mathcal M$ defines several homotopy relations on $\mathbb X^2$. There are $3$ pairs of generators in $\mathbb R[\epsilon_1,\epsilon_2,\epsilon_3]/(\epsilon_i^2)$, and hence there are three homotopy maps $\mathfrak X^3\rightarrow\mathbb X^2$. For an arbitrary $k$, $\nu\in\mathfrak X^k$ defines $\frac{k!}{2(k-2)!}$ homotopies.

\smallskip

To continue this construction further we have to work with iterations of $\mathcal RT$, rather than with iterations of the usual tangent bundle, i.e. instead of $T^2\mathcal M$ we should take sections in $(\mathcal RT)^2\mathcal M$. 

One can construct $(\mathcal RT)^2\mathcal M$, but it is too big. It contains infinitesimal loops, that are completely inside the fibers of $\mathcal RT\mathcal M\rightarrow\mathcal M$. Tangents to these fibers represent infinitesimal automorphisms of tangents to $\mathcal M$, and, as far as $\mathcal M$ is concerned, infinitesimal loops in these fibers should be contracted.

\smallskip

This leads us to the construction of {\bf relatively free Lie-Rinehart algebras}. We formulate this in general terms: let $\pi:\mathcal N_1\rightarrow\mathcal N_2$ be a smooth map, that locally (on $\mathcal N_1$) is a trivial bundle (not necessarily linear). We define $\mathcal R(\mathfrak X^1(\mathcal N_1),\pi)$ to be the space of infinitesimal paths, obtained from $\mathcal R(\mathfrak X^1(\mathcal N_1))$ by contracting all ``vertical loops'' with respect to $\pi$. 

If $\pi:\mathcal N_1\rightarrow pt$ is the unique map to a point, we obtain the usual space of vector fields, if $\pi:\mathcal N_1\rightarrow\mathcal N_1$ is the identity map, we obtain the space $\mathcal R(\mathfrak X^1(\mathcal N_1))$ of all infinitesimal paths from \cite{Ka07}. 

\smallskip

Applying this construction to $\mathcal M$, and iterating, we obtain a sequence $\{\mathbb T^k\mathcal M\}$, that, just like $\{T^k\mathcal M\}$, is a semi-simplicial diagram of linear bundles. We define $\mathbb X^k$ as the set of sections $\mathcal M\rightarrow\mathbb T^k\mathcal M$.

For $k>1$, the set $\mathbb X^k$ is not additive, but it has $k$ different additions {\it over} $\mathbb X^{k-1}$. Also there is a cup product and a composition product, encoding infinitesimal automorphisms and Lie derivatives respectively. Taking non-linear cohomology as described above, and factoring out jets of degenerate submanifolds we obtain the algebra of polyvector fields.

\bigskip

Here is the structure of the paper. In section \ref{kVectors} we recall the construction and some algebraic properties of the usual iterated tangent bundles, and describe the additional structure one has on the sets of sections.

In section \ref{RelativelyFree} we give the construction of relatively free Lie-Rinehart algebras and discuss their functorial properties.

In section \ref{Groupoid} we construct the full infinitesimal groupoid $\mathbb X^*$, discuss some of the algebraic operations defined on this groupoid, including actions of symmetric groups, and compute its cohomology $\mathcal H(\mathbb X^*)$.

\smallskip

Everything in this paper is formulated for smooth real manifolds. All the statements and proofs are also valid, if one uses complex analytic manifolds instead.

\tableofcontents

\section{$k$-vectors and $k$-vector fields}\label{kVectors}

In this section we recall the basic properties of points and sections of iterated tangent bundles, in particular we describe decompositions of sections into sets of vector fields, subject to action by symmetric groups. These decompositions, and the action will be central to our treatment of full infinitesimal groupoids in section \ref{Groupoid}.

\smallskip 

Let $\mathcal M$ be a smooth manifold of dimension $n$. Let $T^k\mathcal M$, $k\geq 0$, be its $k$-th iterated tangent bundle, i.e. $T^k\mathcal M$ is the tangent bundle on $T^{k-1}\mathcal M$, and $T^0\mathcal M=\mathcal M$. 

It is well known (e.g. \cite{Wh82}, \cite{Be08}) that for each $k\geq 1$ there are $k$ vector-bundle projections $\{\pi_{k,i}:T^k\mathcal M\rightarrow T^{k-1}\mathcal M\}_{0\leq i\leq k-1}$, and $\{\pi_{k,i}\}_{k\geq 1}$ satisfy the usual equations of simplicial boundaries.

From the (semi-)simplicial properties of $\{\pi_{k,i}\}_{k\geq 1}$ and the equality 
	$$\pi_1\circ\pi_{2,0}=\pi_1\circ\pi_{2,1},$$ 
it follows easily, that for any $k\geq 1$ all possible projections $T^k\mathcal M\rightarrow\mathcal M$ are equal, and we will denote by $\mathfrak X^k$ the set of smooth sections $\mathcal M\rightarrow T^k\mathcal M$. We will call such sections {\bf $k$-vector fields}, and their values at points of $\mathcal M$ {\bf $k$-vectors}.

\smallskip

There are different ways to interpret points in $T^k\mathcal M$, and we will use the notion of $F$-equivalence, introduced in \cite{Wh82}, since it is very well suited for treatment of $k$-morphisms, defined as jets of $k$-dimensional sub-manifolds in $\mathcal M$.

Let $\nu,\nu':(\mathbb R^k,0)\rightarrow(\mathcal M,p)$ be two $k$-jets, {\bf $\nu$ is $F$-equivalent to $\nu'$} if for any function $f$ on $\mathcal M$ around $p$, and any $1\leq m\leq k$, we have
\begin{equation}
\frac{\partial^m}{\partial_{i_1}\ldots\partial_{i_m}}(f\circ\nu)|_0=
\frac{\partial^m}{\partial_{i_1}\ldots\partial_{i_m}}(f\circ\nu')|_0,
\end{equation}
if $i_j$'s are pairwise different. Obviously this equivalence relation depends on the choice of coordinate system on $\mathbb R^k$. 

\begin{proposition}(\cite{Wh82}, \cite{Be08})\label{SectorsJets}
Let $p$ be a point on $\mathcal M$. There is a bijective correspondence between $k$-vectors and $F$-equivalence classes of $k$-jets of maps $(\mathbb R^k,0)\rightarrow(\mathcal M,p)$.

This correspondence is natural in $\mathcal M$.
\end{proposition}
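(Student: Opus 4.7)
The plan is to construct a natural map $\Phi$ from the set of $k$-jets $(\mathbb{R}^k,0)\to(\mathcal{M},p)$ to the fiber $(T^k\mathcal{M})_p$, read off $\Phi$ in local coordinates to identify its level sets with $F$-equivalence classes, and produce a polynomial preimage of each $k$-vector to obtain surjectivity.

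I would define $\Phi$ inductively. For a $k$-jet $\nu:(\mathbb{R}^k,0)\to(\mathcal{M},p)$, write $\nu_{t_1}(t_2,\ldots,t_k) := \nu(t_1,\ldots,t_k)$, which for each $t_1$ is a $(k-1)$-jet based at $\nu(t_1,0,\ldots,0)$. By induction each $\nu_{t_1}$ yields a point $\Phi_{k-1}(\nu_{t_1})\in T^{k-1}\mathcal{M}$, and the $t_1$-derivative at $0$ of the resulting smooth curve in $T^{k-1}\mathcal{M}$ is an element of $T^k\mathcal{M}$, which we declare to be $\Phi(\nu)$. Equivalently, $\Phi(\nu)=(T^k\nu)(\mathbf{e})$, where $\mathbf{e}\in(T^k\mathbb{R}^k)_0$ is the canonical $k$-vector constructed by iteratively lifting with the standard frame $\partial/\partial t_1,\ldots,\partial/\partial t_k$. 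Either formulation makes naturality in $\mathcal{M}$ immediate, since $T^k$ is a functor and $\mathbf{e}$ has nothing to do with $\mathcal{M}$.

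The main technical step is a local coordinate computation. Choose coordinates $(x_1,\ldots,x_n)$ around $p$. Iterating the canonical coordinate charts on a tangent bundle gives coordinates on $T^k\mathcal{M}$ indexed by pairs $(i,S)$ with $i\in\{1,\ldots,n\}$ and $S\subseteq\{1,\ldots,k\}$. By induction on $k$ one checks that the $(i,S)$-coordinate of $\Phi(\nu)$ equals
\begin{equation}
\frac{\partial^{|S|}(x_i\circ\nu)}{\partial t_{s_1}\cdots\partial t_{s_{|S|}}}\bigg|_{0},\qquad S=\{s_1,\ldots,s_{|S|}\},
\end{equation}
so that only mixed partials with pairwise distinct indices appear --- each parameter $t_j$ is differentiated exactly once through the $k$ iterations. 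Applying the chain rule to an arbitrary $f\in C^\infty(\mathcal{M})$, two jets $\nu,\nu'$ produce the same collection of coordinate-mixed partials if and only if all mixed partials $\partial^m(f\circ\nu)/\partial t_{i_1}\cdots\partial t_{i_m}|_0$ with distinct $i_j$ agree, i.e.\ if and only if $\nu$ and $\nu'$ are $F$-equivalent. This gives well-definedness on $F$-equivalence classes together with injectivity of the induced map.

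Surjectivity follows from an explicit polynomial lift: given $v\in(T^k\mathcal{M})_p$ with local coordinates $\{v_{i,S}\}$, the jet
\begin{equation}
\nu(t_1,\ldots,t_k) \;=\; \Bigl(\,v_{i,\emptyset}+\!\!\sum_{\emptyset\neq S\subseteq\{1,\ldots,k\}}\!\! v_{i,S}\prod_{s\in S}t_s\,\Bigr)_{i=1,\ldots,n}
\end{equation}
satisfies $\Phi(\nu)=v$ by the formula above. The only real obstacle is the inductive book-keeping needed to establish the coordinate expression for $\Phi$; once it is in place, well-definedness, injectivity, surjectivity, and naturality all follow immediately.
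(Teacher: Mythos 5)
Your argument is correct and complete: the reduction of $F$-equivalence to equality of the coordinate mixed partials via the chain rule, the identification of the $(i,S)$-coordinates of $T^k\nu(\mathbf e)$ with those partials, and the polynomial lift for surjectivity together establish the bijection, and the functorial description $\Phi(\nu)=(T^k\nu)(\mathbf e)$ gives naturality. The paper itself offers no proof --- the proposition is quoted from \cite{Wh82} and \cite{Be08} --- and your construction is essentially the standard argument of those references (White's iterated tangents), so there is nothing to reconcile.
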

The semi-simplicial structure on $\{T^k\mathcal M\}_{k\geq 1}$ corresponds to the diagram of coordinate subspaces of $\mathbb R^n$. More precisely, let $\{x_i\}_{0\leq i<k}$ be the natural coordinate system on $\mathbb R^k$, then each $\nu:(\mathbb R^k,0)\rightarrow(\mathcal M,p)$ has $k$ faces:
\begin{equation}
\nu_i:(\mathbb R^{k-1},0)\rightarrow(\mathcal M,p),\qquad 0\leq i\leq k-1,
\end{equation}
with $\nu_i$ being the restriction of $\nu$ to the linear subspace of $\mathbb R^k$, given by vanishing of $x_i$.

If we choose a simplicial model for the $n$-groupoid of $\mathcal M$, i.e. if we consider $k$-simplices in $\mathcal M$ (submanifolds with corners) as $k$-morphisms, it is clear, that we can realize $\nu$ as the jet of a $k$-morphism between jets of $k-1$-morphisms $\{\nu_0,\ldots,\nu_{k-1}\}$. 

Note that $k$-morphisms are represented by $k$-jets, while $k-1$-morphisms are represented by $k-1$-jets. This is not really an inconsistency, since for a $k-1$-dimensional submanifold of $\mathcal M$, the $F$-equivalence class of its $k$-jet is completely determined by its $k-1$-jet.

\smallskip

To describe reparametrizations of $k$-vectors and $k$-fields, i.e. suitable changes of coordinates on $\mathbb R^k$, we use the dual language of morphisms between algebras of functions. Consider a sequence of Weil algebras 
	$$\mathcal W_k:=\mathbb R[\epsilon_0,\ldots,\epsilon_{k-1}]/\{\epsilon_i^2\}_{0\leq i\leq k-1},\quad k\geq 1.$$ 
For any point $p\in\mathcal M$, let $\mathfrak X^0_p$ be the stalk at $p$ of the sheaf of functions on $\mathcal M$. Then points in $T^k\mathcal M$ over $p$ correspond to morphisms of $\mathbb R$-algebras
	$$\mathfrak X^0_p\rightarrow \mathcal W_k,$$
that factor the evaluation map $\mathfrak X^0_p\rightarrow\mathbb R$. Automorphisms of $W_k$ provide reparametrizations of $k$-vectors, and lead to polynomial groups (\cite{Be08}). 

\smallskip

Among all the automorphisms we will be particularly interested in the action of the symmetric group $\mathbb S_k$, permuting generators of $\mathcal W_k$. These permutations induce an action of $\mathbb S_k$ on $T^k\mathcal M$, and this action is important to us, since choosing the order on the generators of $\mathcal W_k$ allows us to decompose any section $\mathcal M\rightarrow T^k\mathcal M$ into a set of sections $\mathcal M\rightarrow T\mathcal M$. 

This decomposition will allow us to introduce several important operations on sections of $T^k\mathcal M$, and these operations are well defined since they are $\mathbb S_k$-invariant.

\bigskip

It is well known (see e.g. \cite{Be08}) that any $k$-vector field $\nu\in\mathfrak X^k$ can be decomposed into a set $\{\alpha_\phi\}$ of $1$-vector fields, indexed by {\it non-emtpy} subsets $\phi\subseteq\{0,\ldots,k-1\}$. 

Since there are different possible ways to decompose, we discuss this here in detail, and we start with an example of $\beta\in\mathfrak X^2$.

\smallskip

The two projections $T^2\mathcal M\rightrightarrows T\mathcal M$ map $\beta$ to two $1$-vector fields $\alpha_0,\alpha_1$. The image of $\alpha_0:\mathcal M\rightarrow T\mathcal M$ is transversal to the fibers of $T\mathcal M$ over $\mathcal M$, and hence at the points of the image we have a decomposition of the tangent spaces into vertical and horizontal parts. Applying this decomposition to $\beta$ we obtain 
	\begin{equation}\label{2Example}\beta\mapsto\{\alpha_0,\alpha_1,\alpha_{01}\},\quad\alpha_0,\alpha_1,\alpha_{01}\in\mathfrak X^1,\end{equation}
where $\alpha_{01}$ is tangent to the fibers of $T\mathcal M\rightarrow\mathcal M$.

This decomposition, however, depends on the choice of $\alpha_0$ as the first projection. More precisely, there is the canonical action of the symmetric group $\mathbb S_2$ on $T^2\mathcal M$, and of course we can permute $\alpha_0$ and $\alpha_1$ in (\ref{2Example}). If $\sigma\in\mathbb S_2$ is the non-trivial element, we have
	\begin{equation}\label{2Permutation}\sigma(\beta)\mapsto\{\alpha_1,\alpha_0,\alpha_{01}+[\alpha_0,\alpha_1]\}.\end{equation}
Appearance of $[\alpha_0,\alpha_1]$ in (\ref{2Permutation}) is due to the following. Every function $f$ on $\mathcal M$ defines two functions on $T\mathcal M$: one by composition with the projection $\pi_1:T\mathcal M\rightarrow\mathcal M$, and the other is $df$. Reflecting this fact we can combine the two lifts into one {\bf total lift} $f\circ\pi_1+\epsilon df$ on $T\mathcal M$, where $\epsilon^2=0$. 

Being a collection of tangents to $T\mathcal M$, $\beta$ acts on the total lift $f\circ\pi_1+\epsilon df$, and using the decomposition into horizontal/vertical parts, provided by $\alpha_0$, we can write this action as follows:
	\begin{equation}\label{2Action}f\circ\pi_1+\epsilon df\mapsto\alpha_1(f)\circ\pi_1+\epsilon(\alpha_{01}(f)+\alpha_1\alpha_0(f))\circ\pi_1.\end{equation}
In fact, $\beta$ is determined by its first projection to $T\mathcal M$, i.e. $\alpha_0$, and its action (\ref{2Action}) on the total lifts of functions from $\mathcal M$. On the $\epsilon$-part this action is that of a second order differential operator, and given some choices, as for example the order on the pair $\alpha_0,\alpha_1$, we can extract the first order part: $\alpha_{01}$. The opposite choice produces a different extraction: $\alpha_{01}+[\alpha_0,\alpha_1]$.

\smallskip

In general, a section $\nu:\mathcal M\rightarrow T^k\mathcal M$ is uniquely determined by $2^k-1$ sections $\{\alpha_\phi:\mathcal M\rightarrow T\mathcal M\}$, if we fix an order on $\phi$'s. We use the lexicographical one. 

Then a set $\{\alpha_\phi\}$ of $1$-vector fields uniquely determines a $k$-vector field: for each $\phi$ of size $m$, the corresponding differential operator of order $m$ is
	\begin{equation}\underset{\phi_i>\ldots>\phi_1}\Sigma\alpha_{\phi_i}\circ\ldots\circ\alpha_{\phi_1},\end{equation}
where the sum is taken over all decompositions $\phi=\underset{1\leq j\leq i}\bigcup\phi_j$ into pairwise disjoint subsets. The action of the symmetric group $\mathbb S_k$ on $T^k\mathcal M$ is expressed as follows: let $0\leq i< k-1$, and let $\sigma_{i,i+1}\in\mathbb S_k$ be the swapping of $i$ and $i+1$. Then $\sigma_{i,i+1}(\nu_k)$ is given by $\{\alpha'_\phi\}$, where $\alpha'_\phi=\alpha_\psi$, if $\sigma_{i,i+1}(\phi)=\psi$, $\phi\neq\psi$, and for the rest of $\phi\subseteq\{0,\ldots,{k-1}\}$
	\begin{equation}\label{TheAction}\alpha'_\phi=\alpha_\phi+\underset{\phi'<\phi''}\Sigma[\alpha_{\phi'},\alpha_{\phi''}],\end{equation}
where the sum is taken over all decompositions $\phi=\phi'\cup\phi''$, s.t. $\phi'\cap\phi''=\emptyset$, and $\sigma_{i,i+1}(\phi')>\sigma_{i,i+1}(\phi'')$.

\bigskip

There are operations on $k$-vector fields that are performed pointwise, i.e. they can be defined also for $k$-vectors, and there are operations that require sections. Now we describe some of the both types of these operations in terms of $k$-vector fields.

\smallskip

We have already mentioned that for each $k\geq 1$ there are $k$ vector bundle structures on $T^k\mathcal M$ over $T^{k-1}\mathcal M$. Obviously these $k$ additions on $T^k\mathcal M$ translate to $k$ additions on $\mathfrak X^k$: two sections $\{\mu_\phi\}$, $\{\nu_\phi\}$ can be added if there is a $k-1$-dimensional face $\psi\subset\{0,\ldots,k-1\}$, i.e. $\psi$ has exactly $k-1$ elements, s.t.
	\begin{equation}\mu_\phi=\nu_\phi,\quad\forall\phi\subseteq\psi.\end{equation}
Then $\mu+_\psi\nu$ is given as follows: if $\phi\subseteq\psi$, then $(\mu+_\psi\nu)_\phi=\mu_\phi=\nu_\phi$, if $\phi\nsubseteq\psi$ then $(\mu+_\psi\nu)_\phi=\mu_\phi+\nu_\phi$.

\smallskip

An operation, closely related to the additions, is the {\bf strong difference} between two $2$-vectors. It is not defined for every couple of $\beta_1,\beta_2\in T^2\mathcal M$, but only for those that have same projections to $T\mathcal M$, i.e.
	\begin{equation}\pi_{2,0}(\beta_1)=\pi_{2,0}(\beta_2),\quad\pi_{2,1}(\beta_1)=\pi_{2,1}(\beta_2).\end{equation}
In this case it is easy to see that the difference between $\beta_1,\beta_2$ as vectors on $\pi_{2,0}(\beta_1)\in T\mathcal M$ is a vector tangent to the fiber of $\pi_{2,0}(\beta_1)$ over $\mathcal M$. This vector can be obtained by vertical lift of some $\alpha\in T\mathcal M$, which is by definition the strong difference of $\beta_1,\beta_2$  
	\begin{equation}\alpha:=\beta_1-\beta_2.\end{equation}
Clearly, one can take strong difference of a pair of $k$-vectors for any $k\geq 2$, considered as $2$-vectors on $T^{k-2}\mathcal M$. This operation is defined also for $k$-fields, and in terms of decompositions into $1$-fields it is represented as follows. 

Let $\mu=\{\alpha_\phi\}$, $\nu=\{\beta_\phi\}$ be two $k$-fields. We can define $\mu-\nu$ only if all $k-1$-faces of $\mu,\nu$ coincide, i.e. for any $\phi\subset\{0,\ldots,k-1\}$ of of size $\leq k-1$, we have $\alpha_\phi=\beta_\phi$. Then $\mu-\nu=\{\gamma_\psi\}$ is the $k-1$-field defined as follows: for any $\psi\subseteq\{0,\ldots,k-3\}$
	\begin{equation}\label{Strong}\gamma_\psi=\alpha_\psi=\beta_\psi,\quad\gamma_{\psi\cup\{k-2\}}=\alpha_{\psi\cup\{k-2,k-1\}}-\beta_{\psi\cup\{k-2,k-1\}}.\end{equation}

\smallskip

There is another important pointwise operation: {\bf the cup product}, however, it is only partially defined. Consider the following Weil algebras
	$$\mathcal V_k:=\mathbb R[\varepsilon_1,\ldots,\varepsilon_k]/(\varepsilon_i\varepsilon_j).$$
For $k>1$ $\mathcal V_k\neq\mathcal W_k$, and there are several $\mathbb R$-algebra morphisms $\mathcal V_k\rightarrow\mathcal W_k$.

Now let $p$ be a point in $\mathcal M$, and let $x:\mathfrak X^0_p\rightarrow\mathcal W_k$, $y:\mathfrak X^0_p\rightarrow\mathcal W_m$ be a $k$-vector and an $m$-vector at $p$. Suppose that $y$ factors through some $g:\mathcal V_m\rightarrow\mathcal W_m$. Then we can define a $k+m$-vector $x\cup y$, using the following $\mathbb R$-algebra morphism
	\begin{equation}\mathcal W_k\prod_\mathbb R\mathcal V_m\rightarrow\mathcal W_{k+m},\quad\epsilon_i\mapsto\epsilon_i,\quad\varepsilon_j\mapsto h\circ g(\varepsilon_j)\epsilon_0\ldots\epsilon_{k-1},\end{equation}
where $h:\mathcal W_m\rightarrow\mathcal W_{k+m}$ maps $\epsilon_i$ to $\epsilon_{k+i}$. 

For example: if $m=k=1$, then $x\cup y$ is the evaluation at $x$ of the well known ``vertical lift'' of $y$ to $T\mathcal M$. Similarly there are vertical lifts of $1$-vectors to $T^k\mathcal M$ for $k>1$. If $m>1$, this operation is not everywhere defined anymore, but only for those $y:\mathfrak X^0_p\rightarrow\mathcal W_m$, that factor through $\mathcal V_m$.

Notice that the cup product is invariant with respect to the action of symmetric groups, i.e. if $\sigma_k\in\mathbb S_k$, $\sigma_m\in\mathbb S_m$, then
	\begin{equation}(\sigma_k x)\cup(\sigma_m y)=(\sigma_k\times\sigma_m)(x\cup y).\end{equation}
	
In terms of sequences of $1$-vector fields cup product is represented as follows: let $\{\alpha_\phi\}=\mu\in\mathfrak X^k$ and $\{\beta_\psi\}=\nu\in\mathfrak X^m$, and suppose their cup product $\{\gamma_\chi\}=\mu\cup\nu\in\mathfrak X^{k+m}$ is everywhere defined (i.e. $\nu:\mathfrak X^0\rightarrow\mathcal W_m\otimes_\mathbb R\mathfrak X^0$ factors through $\mathcal V_m\otimes_\mathbb R\mathfrak X^0$), then
	\begin{equation}\gamma_\phi=\alpha_\phi,\quad\gamma_{\phi\cup\psi}=\beta_\psi,\end{equation}
and the rest of components are $0$. Here we consider $\phi\subseteq\{0,\ldots,k-1\}$ and $\psi\subseteq\{0,\ldots,m-1\}$ as subsets of $\{0,\ldots,k+m-1\}$ given by the (lexicographical) order preserving bijection
	\begin{equation}\label{OrdinalSum}\{0,\ldots,k-1\}\coprod\{0,\ldots,m-1\}\overset{\simeq}\rightarrow\{0,\ldots,k+m-1\}.\end{equation}

\smallskip

An important operation, that is not defined pointwise, is {\bf the composition product} of vector fields. Let $\mu\in\mathfrak X^k$, $\nu\in\mathfrak X^m$. Since $\mu$ is a map $\mathcal M\rightarrow T^k\mathcal M$, applying the tangent functor $m$ times we obtain a map $\mu_*:T^m\mathcal M\rightarrow T^{m+k}\mathcal M$. Evaluating at $\nu:\mathcal M\rightarrow T^m\mathcal M$ we get a section $\mu\times\nu:\mathcal M\rightarrow T^{m+k}\mathcal M$. 

In terms of sequences of $1$-vector fields this operation is written as follows. Let $\{\alpha_\phi\}$, $\{\beta_\psi\}$, $\{\gamma_\chi\}$ be $\mu,\nu,\mu\times\nu$ respectively. Then
	\begin{equation}\gamma_\phi=\alpha_\phi,\quad\gamma_\psi=\beta_\psi,\end{equation}
and the rest of components are $0$. Here again we consider $\phi,\psi$ as subsets of $\{1,\ldots,k+m\}$ using (\ref{OrdinalSum}).

Notice that also composition product is invariant with respect to the action of symmetric groups, i.e. for any $\sigma_k\in\mathbb S_k$, $\sigma_m\in\mathbb S_m$ we have
	\begin{equation}(\sigma_k\mu)\times(\sigma_m\nu)=(\sigma_k\times\sigma_m)(\mu\times\nu).\end{equation}
	
\section{Relatively free Lie-Rinehart algebras}\label{RelativelyFree}

In this section we recall (from \cite{Ka07}) the construction of the bundle of infinitesimal paths on a manifold, and relativize it, i.e. construct bundles of infinitesimal paths, where we contract some of the loops. This relative version has nice functorial properties, and we use them in the next section to build the iterated bundles of infinitesimal paths on a manifold. 

\smallskip

In \cite{Ka07} the space of infinitesimal paths on $\mathcal M$ is defined as the free Lie-Rinehart algebra, generated by $\mathfrak X^1$. Here are the details of this free construction in the general case.

Let $A$ be an $\mathbb R$-algebra, and let $M$ be an $A$-module {\bf with an anchor}, i.e. there is an $A$-map $M\rightarrow Der_{\mathbb R}(A)$. Let $\mathcal L(M)$ be the free Lie algebra over $\mathbb R$, generated by $M$. By its construction $\mathcal L(M)$ is a graded space, with $\mathcal L(M)_d$ being generated by Lie monomials of length $d$. There is an obvious extension of the anchor $\mathcal L(M)\rightarrow Der_{\mathbb R}(A)$.

The free Lie-Rinehart algebra $\mathcal R(M)$, generated by $M$, is a filtered $A$-module, inductively defined as follows:
\begin{itemize}
	\item $\mathcal R(M)_{\leq 1}=M$,
	\item for $n>1$ $\mathcal R(M)_{\leq d}$ is $\underset{1\leq i\leq d}\bigoplus\mathcal L(M)_i$ modulo the following relations
			\begin{equation}[x,fy]-[fx,y]=x(f)y+y(f)x,\quad[x,q]=0,\end{equation}
		where $x,y\in\mathcal L(M)$, $f\in A$, $q$ is in the kernel of 
			$$\underset{1\leq i\leq d-1}\bigoplus\mathcal L(M)_i\rightarrow\mathcal R(M)_{\leq d-1},$$ 
		and for any $x\in\mathcal L(M)$ we write $x(f)$ for the action of $x$ on $f$ through the anchor.
\end{itemize}
It is easy to see that the kernel of $\mathcal L(M)\rightarrow\mathcal R(M)$ is a Lie ideal, and hence $\mathcal R(M)$ inherits a Lie structure. Also it is easy to check that the anchor $\mathcal L(M)\rightarrow Der_{\mathbb R}(A)$ vanishes on the kernel of $\mathcal L(M)\rightarrow\mathcal R(M)$, and hence there is a well defined action of $\mathcal R(M)$ on $A$. Finally, the action of $A$ on $\mathcal R(M)$ is given by
	\begin{equation}f[x,y]=[fx,y]+y(f)x=[x,fy]-x(f)y.\end{equation}
	
\smallskip

Applying the above construction to the set $\mathfrak X^1$ of vector fields on $\mathcal M$, we get a new Lie-Rinehart algebra $\mathcal R(\mathfrak X^1)$, which is the space of infinitesimal paths, without contractions of surfaces, except for the degenerate ones (i.e. we do have $[\alpha,\alpha]=0$).

The sheaf $\mathcal R(\mathfrak X^1)$ is locally free, and hence we can form the linear bundle $\mathcal RT\mathcal M$, having $\mathcal R(\mathfrak X^1)$ as the set of sections. We would like to iterate this construction, i.e. we would like to consider spaces of infinitesimal paths on spaces of infinitesimal paths on $\mathcal M$, and so on. First we need to establish some of the functorial properties of $\mathcal R$.

\begin{proposition}\label{Embeddings} Let $\mathcal M$, $\mathcal N$ be two manifolds, and let $F:\mathcal M\rightarrow\mathcal N$ be a smooth map, that locally (on $\mathcal M$) is an embedding. Then $F$ extends to a map of pairs
	\begin{equation}\xymatrix{\mathcal RT\mathcal M\ar[d]\ar[rr]^{\mathcal RF} && \mathcal RT\mathcal N\ar[d]\\ \mathcal M\ar[rr]_F && \mathcal N}\end{equation}
and this extension is functorial in $F$.
\end{proposition}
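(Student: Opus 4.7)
The plan is to construct $\mathcal{R}F$ via the universal property of the free Lie--Rinehart algebra, applied locally and assembled into a bundle map. Cover $\mathcal{M}$ by opens $U$ on which $F|_U$ is an embedding onto a closed submanifold of some open $W\subseteq\mathcal{N}$ admitting a tubular retraction $r:W\to F(U)$. Since the construction will turn out to be canonical, these local pieces glue and functoriality reduces to functoriality on each chart.

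On such a $U$, define the map on generators as follows. For $\alpha \in \mathfrak{X}^1(U)$ the composition $\widehat{\alpha} := dF\circ\alpha : U\to T\mathcal{N}$ is a section of $F^{\ast}T\mathcal{N}$ tangent to the submanifold $F(U)$. Extend $\widehat{\alpha}$ to a vector field $\tilde{\alpha}\in\mathfrak{X}^1(W)$, e.g.\ as the horizontal lift determined by $r$. The identity
\begin{equation}
\tilde{\alpha}(g)\circ F \;=\; \alpha(g\circ F), \qquad g\in\mathfrak{X}^0(W),
\end{equation}
shows that $\tilde{\alpha}$ is anchor--compatible along $F$. Let $L$ be the Lie--Rinehart algebra over $\mathfrak{X}^0(U)$ obtained from $\mathcal{R}(\mathfrak{X}^1(W))$ by restricting scalars along the surjection $F^{\ast}:\mathfrak{X}^0(W)\twoheadrightarrow\mathfrak{X}^0(U)$, $g\mapsto g\circ F$, and quotienting by the ideal generated by functions vanishing on $F(U)$; the anchor descends because derivations tangent to $F(U)$ act on $\mathfrak{X}^0(F(U))\cong\mathfrak{X}^0(U)$. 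Then $\alpha\mapsto\tilde{\alpha}$ is a morphism of $\mathfrak{X}^0(U)$--modules--with--anchor $\mathfrak{X}^1(U)\to L$, and by the universal property of $\mathcal{R}$ it extends uniquely to a morphism of Lie--Rinehart algebras $\mathcal{R}(\mathfrak{X}^1(U))\to L$. This is precisely an $\mathfrak{X}^0(U)$--linear map $\mathcal{R}(\mathfrak{X}^1(U))\to\Gamma(U, F^{\ast}\mathcal{R}T\mathcal{N})$, hence a bundle morphism $\mathcal{R}TU\to\mathcal{R}T\mathcal{N}$ covering $F|_U$.

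The crux is independence of choices. Two extensions of $\widehat{\alpha}$ to $W$ differ by a vector field vanishing on $F(U)$, so their difference lies in the kernel of the projection to $L$; since vector fields tangent to $F(U)$ form a Lie subalgebra, iterated free Lie--Rinehart brackets of such extensions agree modulo this kernel. Independence of $r$ and of the chart follow similarly, which guarantees that the local morphisms agree on overlaps. Functoriality $\mathcal{R}(G\circ F)=\mathcal{R}G\circ\mathcal{R}F$ then follows: composing the two local extension procedures produces an extension equivalent to the one built directly for $G\circ F$, and the universal property forces the resulting morphisms to coincide.

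The main obstacle is verifying that the defining relations $[x,fy]-[fx,y]=x(f)y+y(f)x$ of $\mathcal{R}$ descend cleanly to $L$ when $f$ vanishes on $F(U)$. This uses crucially that $F$ is locally an embedding: it gives the surjectivity of $F^{\ast}$ and ensures that the ideal of functions vanishing on $F(U)$ is stable under the action of vector fields tangent to $F(U)$, so that the quotient $L$ inherits a well--defined Lie--Rinehart structure on which the universal property of $\mathcal{R}$ can be invoked.
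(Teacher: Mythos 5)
Your overall strategy is sound and is essentially the paper's argument in coordinate-free dress: the paper chooses coordinates so that $F$ becomes the inclusion of a coordinate subspace $\mathbb R^m\subset\mathbb R^m\times\mathbb R^k$, extends vector fields constantly in the normal directions using the flat structure (your horizontal lift along the tubular retraction $r$ plays exactly this role, and your identity $\tilde\alpha(g\circ r)=\alpha(g)\circ r$ is the analogue of its compatibility with the inclusion $\mathfrak X^0(\mathbb R^m)\hookrightarrow\mathfrak X^0(\mathbb R^n)$), pushes through $\mathcal L$ and $\mathcal R$, and only at the very end tensors down to $F^*(\mathcal RT\mathcal N)$. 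However, there is a genuine gap in the step where you invoke the universal property of $\mathcal R$ with target $L$. The object $L=\mathcal R(\mathfrak X^1(W))\otimes_{\mathfrak X^0(W)}\mathfrak X^0(U)$ is \emph{not} a Lie--Rinehart algebra over $\mathfrak X^0(U)$: the bracket of $\mathcal R(\mathfrak X^1(W))$ does not descend to the quotient by $I\cdot\mathcal R(\mathfrak X^1(W))$, where $I$ is the ideal of functions vanishing on $F(U)$. Concretely, with $U=\mathbb R\subset W=\mathbb R^2$, $I=(t)$, the element $t\partial_s$ is zero in $L$, yet $[\partial_t,t\partial_s]=\partial_s$ is not; the anchor fails to descend for the same reason (a vector field not tangent to $F(U)$ does not act on $\mathfrak X^0(U)$). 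Your stated justification covers only derivations tangent to $F(U)$, but $L$ contains the classes of all elements of $\mathcal R(\mathfrak X^1(W))$. So the universal property cannot be applied with target $L$ as written, and your closing paragraph, which worries about the relations ``descending to $L$ when $f$ vanishes on $F(U)$,'' is aimed at the wrong issue: such $f$ are not elements of $\mathfrak X^0(U)$ at all; the real issue is that the quotient destroys the Lie--Rinehart structure.

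The fix is to do what the paper does: first build the map with target the honest Lie--Rinehart algebra $\mathcal R(\mathfrak X^1(W))$, and only afterwards compose with the $\mathfrak X^0(U)$-module projection onto $L\cong\Gamma(U,F^*\mathcal RT\mathcal N)$, which need only be a linear map of modules, not of Lie--Rinehart algebras. Concretely, one checks directly that $\mathcal L(\tilde{(-)})$ carries the kernel of $\mathcal L(\mathfrak X^1(U))\to\mathcal R(\mathfrak X^1(U))$ into the kernel of $\mathcal L(\mathfrak X^1(W))\to\mathcal R(\mathfrak X^1(W))$: the relation $[x,fy]-[fx,y]=x(f)y+y(f)x$ for $f\in\mathfrak X^0(U)$ is sent to the same relation for $f\circ r\in\mathfrak X^0(W)$, which already holds in $\mathcal R(\mathfrak X^1(W))$, precisely because the horizontal lift satisfies $\widetilde{f\alpha}=(f\circ r)\tilde\alpha$ and $\tilde\alpha(f\circ r)=\alpha(f)\circ r$. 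Your independence-of-choices argument then goes through essentially as you sketch it (the key point being that $I$ is stable under derivations tangent to $F(U)$, so differences of extensions stay in $I\cdot\mathcal R(\mathfrak X^1(W))$ under iterated brackets), and matches the paper's observation that different coordinate choices agree after restriction to the image of $\mathcal M$.
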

\begin{proof} Since construction of $\mathcal RT$ can be done locally (\cite{Ka07}), we can (choosing local coordinates) assume that $\mathcal M=\mathbb R^m$, $\mathcal N=\mathbb R^n=\mathbb R^m\times\mathbb R^k$, and $F:\mathbb R^m\rightarrow\mathbb R^m\times\mathbb R^k$ is inclusion of a coordinate subspace.

Using the natural flat structure on $\mathbb R^n$ we have a map 
	\begin{equation}\label{Fields}F_*:\mathfrak X^1(\mathbb R^m)\rightarrow\mathfrak X^1(\mathbb R^n),\end{equation}
that is a morphism of Lie algebras. Consequently there is an induced morphism 
	$$\mathcal L(F_*):\mathcal L(\mathfrak X^1(\mathbb R^m))\rightarrow\mathcal L(\mathfrak X^1(\mathbb R^n)).$$

There is also a projection $\mathbb R^n=\mathbb R^m\times\mathbb R^k\rightarrow\mathbb R^m$, and hence we have an inclusion of algebras of functions 
	\begin{equation}\label{Functions}\mathfrak X^0(\mathbb R^m)\rightarrow\mathfrak X^0(\mathbb R^n).\end{equation} 
Using this inclusion it is easy to see that $\mathcal L(F_*)$ maps kernel of $\mathcal L(\mathfrak X^1(\mathbb R^m))\rightarrow\mathcal R(\mathfrak X^1(\mathbb R^m))$ to the kernel of $\mathcal L(\mathfrak X^1(\mathbb R^m))\rightarrow\mathcal R(\mathfrak X^1(\mathbb R^m))$, and hence we have an $\mathbb R$-linear map 
	$$\mathcal R(F_*):\mathcal R(\mathfrak X^1(\mathbb R^m))\rightarrow\mathcal R(\mathfrak X^1(\mathbb R^n)).$$
This map is also $\mathfrak X^0(\mathbb R^m)$-linear, where we see $\mathcal R(\mathfrak X^1(\mathbb R^n))$ as an $\mathfrak X^0(\mathbb R^m)$-module through (\ref{Functions}).

Now we compose $\mathcal R(F_*)$ with the projection 
	$$\mathcal R(\mathfrak X^1(\mathbb R^n))\rightarrow\mathcal R(\mathfrak X^1(\mathbb R^n))\otimes_{\mathfrak X^0(\mathbb R^n)}\mathfrak X^0(\mathbb R^m),$$
and obtain a morphism of bundles
	\begin{equation}\mathcal RF:\mathcal RT\mathcal M\rightarrow F^*(\mathcal RT\mathcal N).\end{equation}
	
We claim that $\mathcal RF$ is independent of the choice of local coordinates on $\mathcal N$. Indeed, a different choice produces different maps in (\ref{Fields}), (\ref{Functions}), but they become the same, when restricted to the image of $\mathcal M$ in $\mathcal N$, and it is easy to check that this implies the resulting $\mathcal RF$'s are equal.

\smallskip

Having two smooth maps $\mathcal M\rightarrow\mathcal N\rightarrow\mathcal N'$, s.t. each one is a local embedding, it is clear that locally we can present them as inclusions of coordinate subspaces $\mathbb R^m\rightarrow\mathbb R^{m+k}\rightarrow\mathbb R^{m+k+k'}$, and the corresponding choice of local coordinates implies functoriality.\end{proof}

\smallskip

Now we would like to iterate the $\mathcal RT$ construction. In some of our arguments we assume that the manifolds in question are finite dimensional, while $\mathcal RT\mathcal M$ is infinite dimensional, if dimension of $\mathcal M$ is greater than $1$. However, since $\mathcal RT\mathcal M$ is filtered, and each $\mathcal RT\mathcal M_{\leq d}$ is finite dimensional, we can treat $\mathcal RT\mathcal M$ as if it is finite dimensional itself, as long as everything that we do happens in some $\mathcal RT\mathcal M_{\leq d}$ for $d$ large enough.

We are interested in particular in tangent vectors to $\mathcal RT\mathcal M$, and we will always assume that each vector is tangent to some $\mathcal RT\mathcal M_{\leq d}$. Therefore, as long as there are only finitely many tangents involved, there is a finite dimensional manifold $\mathcal RT\mathcal M_{\leq d}$, where these tangents live.

\smallskip

Let $\mathfrak X^1(\mathcal RT\mathcal M)$ be the space of vector fields on $\mathcal RT\mathcal M$, s.t. for every field $\alpha$ there is $d<\infty$, s.t. $\alpha$ is tangent to $\mathcal RT\mathcal M_{\leq d}$. Applying $\mathcal R$ for each $d$, and using functoriality of $\mathcal R$ with respect to local embeddings, we obtain a bundle $(\mathcal RT)^2\mathcal M$ of infinitesimal paths on $\mathcal RT\mathcal M$.

Iterating this procedure further, we get a sequence of (filtered infinite dimensional) manifolds $\{(\mathcal RT)^k\mathcal M\}_{k\geq 1}$. However, this sequence is not the right substitute for the (semi-simplicial) sequence $\{T^k\mathcal M\}_{k\geq 1}$ of iterated tangent bundles.

\smallskip

For $k\geq 2$ $(\mathcal RT)^k\mathcal M$ is too big. For example, there are loops in $(\mathcal RT)^2\mathcal M$ that are built of vector fields, tangent to the fibers of $\mathcal RT\mathcal M\rightarrow\mathcal M$. These fibers are linear spaces and have a natural flat connection. As far as paths and surfaces on $\mathcal M$ are concerned, we are interested only in flat vertical fields on $\mathcal RT\mathcal M$, and the corresponding loops have unique flat fillings.

\smallskip

All this forces us to introduces a relative version of the free Lie-Rinehart algebra construction. Instead of anchored modules we have the following.

\begin{definition} Let $A$ be a commutative $\mathbb R$-algebra, and let $(\mathfrak g\overset{a}\rightarrow Der_{\mathbb R}(A))$ be a Lie-Rinehart algebra. {\bf An anchored $(\mathfrak g,A)$-module} is an $A$-module $M$, together with $A$-maps 
	$$\mathfrak g\overset\iota\rightarrow M\overset{b}\rightarrow Der_{\mathbb R}(A),$$
s.t. $\iota$ is injective, $a=b\iota$, and having a Lie module structure $\mathfrak g\otimes_{\mathbb R}M\rightarrow M$, s.t.
	\begin{equation}\gamma(fm)=\gamma(f)m+f\gamma(m),\quad(f\gamma)(m)=f\gamma(m)-m(f)\iota(\gamma),\end{equation}
	\begin{equation}\label{BracketComp}b(\gamma(m))=[a(\gamma),b(m)].\end{equation}	
\end{definition}
An example of an anchored $(\mathfrak g,A)$-module is given by a morphism of Lie-Rinehart algebras $\mathfrak g\rightarrow\mathfrak h$ over $A$, where we take $\mathfrak h$ to be $M$. 

If we fix $\mathfrak g$ and $A$ we have a forgetful functor from the category of Lie-Rinehart algebras under $\mathfrak g$ to the category of anchored $(\mathfrak g,A)$-modules. This functor has a left adjoint, that we now describe.

\smallskip

Let $M$ be an anchored $(\mathfrak g,A)$-module. Recall that $\mathcal L(M)$ is the free Lie algebra over $\mathbb R$, generated by $M$. The action of $\mathfrak g$ on $M$ extends to an action on $\mathcal L(M)$, by requiring that $\gamma([x,y])=[\gamma(x),y]+[x,\gamma(y)]$. 

We inductively define $\mathcal R(M,\mathfrak g)$ as follows:
\begin{itemize}
	\item $\mathcal R(M,\mathfrak g)_{\leq 1}=M$,
	\item for $d>1$ $\mathcal R(M,\mathfrak g)_{\leq d}$ is $\underset{1\leq i\leq d}\bigoplus\mathcal L(M)_i$ modulo the following relations
			\begin{equation}\label{Old}[x,fy]-[fx,y]=x(f)y+y(f)x,\quad[x,q]=0,\end{equation}
			\begin{equation}\label{New}[\iota(\gamma),x]=\gamma(x),\end{equation}
		where $x,y\in\mathcal L(M)$, $f\in A$, $\gamma\in\mathfrak g$, $q$ is in the kernel of 
			$$\underset{1\leq i\leq n-1}\bigoplus\mathcal L(M)_i\rightarrow\mathcal R(M)_{\leq n-1},$$ 
		and for any $x\in\mathcal L(M)$ we write $x(f)$ for the action of $x$ on $f$ through the anchor.
\end{itemize}
Here we consider $\mathcal L(M)$ as an $\mathbb R$-space, and divide it by the subspace, generated by (\ref{Old}), (\ref{New}). From the construction it is clear that $\mathcal R(M,\mathfrak g)$ is a Lie algebra over $\mathbb R$, inheriting the Lie structure from $\mathcal L(M)$. We claim that in addition $\mathcal R(M,\mathfrak g)$ is an $A$-module, and the projection $\mathcal L(M)\rightarrow\mathcal R(M,\mathfrak g)$ factors through $\mathcal R(M)$.

Notice that the action of $\mathfrak g$ on $\mathcal L(M)$ is compatible with (\ref{Old}), i.e. the kernel of $\mathcal L(M)\rightarrow\mathcal R(M)$ is stable under the action of $\mathfrak g$. Therefore this action extends to $\mathcal R(M)$, and hence (\ref{New}) are well defined on $\mathcal R(M)$. This implies that $\mathcal L(M)\rightarrow\mathcal R(M,\mathfrak g)$ factors through $\mathcal R(M)$.

There is an $A$-module structure on $\mathcal R(M)$, given by $f[x,y]=[fx,y]+y(f)x=[x,fy]-x(f)y$. We claim that the kernel of $\mathcal R(M)\rightarrow\mathcal R(M,\mathfrak g)$ is an $A$-submodule. Indeed, we have
	\begin{equation}f([\iota(\gamma),x]-\gamma(x))=[\iota(\gamma),fx]-\gamma(fx),\end{equation}
for any $x\in\mathcal L(M)$, $f\in A$, and $\gamma\in\mathfrak g$. Consequently $\mathcal R(M,\mathfrak g)$ is an $A$-module.

Finally we note that $\mathcal R(M,\mathfrak g)$ inherits an action on $A$ from $\mathcal R(M)$. This is rather obvious, since elements of the kernel of $\mathcal R(M)\rightarrow\mathcal R(M,\mathfrak g)$ act trivially on $A$ (this is a consequence of (\ref{BracketComp})).

\smallskip

For us the main applications of the relatively free Lie-Rinehart algebra construction are for {\bf locally trivial bundles}, i.e. smooth maps $F:\mathcal M\rightarrow\mathcal N$, s.t. locally (on $\mathcal M$) $F$ is a trivial bundle. Let $\mathfrak X^1(F)$ be the vector fields tangent to the fibers of $F$, then we have the relatively free Lie-Rinehart algebra $\mathcal R(\mathfrak X^1(\mathcal M),\mathfrak X^1(F))$ over $\mathfrak X^0(\mathcal M)$, that defines the bundle $\mathcal RT_\mathcal N\mathcal M$ of {\bf $F$-horizontal infinitesimal paths} on $\mathcal M$.

If we choose $F$ to be the unique map $\mathcal M\rightarrow pt$, we have $\mathcal RT_{pt}\mathcal M=T\mathcal M$, the usual tangent bundle. If we choose $F$ to be the identity map $\mathcal M=\mathcal M$, we have $\mathcal RT_\mathcal M\mathcal M=\mathcal RT\mathcal M$, the space of infinitesimal paths from \cite{Ka07}.

\smallskip

A nice property of $\mathcal RT_\mathcal N\mathcal M$, that will be used in the next section, is that it is functorial in both arguments, as the following proposition shows.

\begin{proposition} Let $F_1:\mathcal M_1\rightarrow\mathcal N_1$, $F_2:\mathcal M_2\rightarrow\mathcal N_2$ be two locally (on domains) trivial bundles. Suppose we are given a commutative diagram of smooth maps
	\begin{equation}\label{Diagram}\xymatrix{\mathcal M_1\ar[d]\ar[rr] && \mathcal M_2\ar[d]\\ \mathcal N_1\ar[rr] && \mathcal N_2}\end{equation}
s.t. also the horizontal arrows are locally trivial bundles. Then we have a smooth map
	\begin{equation}\label{Map}\mathcal RT_{\mathcal N_1}\mathcal M_1\rightarrow\mathcal RT_{\mathcal N_2}\mathcal M_2,\end{equation}
and (\ref{Map}) is functorial in (\ref{Diagram}).\end{proposition}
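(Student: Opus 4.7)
The strategy parallels the coordinate argument of Proposition \ref{Embeddings}, extended to handle the relative construction. First I reduce to a local model: since $F_1$, $F_2$, $G$, $H$ are locally trivial bundles and (\ref{Diagram}) commutes, around any point of $\mathcal{M}_1$ one can choose coordinates realizing the whole square as a diagram of coordinate projections, explicitly
$$\mathcal{M}_1 \cong \mathcal{N}_2 \times A \times B \times C,\quad \mathcal{M}_2 \cong \mathcal{N}_2 \times A,\quad \mathcal{N}_1 \cong \mathcal{N}_2 \times B,$$
with $F_1, F_2, G, H$ forgetting the factors not present in their targets. This mirrors the reduction to inclusions of coordinate subspaces in Proposition \ref{Embeddings}.

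In this local model the pointwise differential of $G$ yields an $\mathfrak{X}^0(\mathcal{M}_1)$-linear push-forward
$$G_{\ast}\colon \mathfrak{X}^1(\mathcal{M}_1) \longrightarrow \mathfrak{X}^1(\mathcal{M}_2) \otimes_{\mathfrak{X}^0(\mathcal{M}_2)} \mathfrak{X}^0(\mathcal{M}_1)$$
which kills the $B$- and $C$-components and intertwines the anchors through $G^{\ast}\colon\mathfrak{X}^0(\mathcal{M}_2)\to\mathfrak{X}^0(\mathcal{M}_1)$. Commutativity $F_{2\ast}G_{\ast} = H_{\ast} F_{1\ast}$ gives $G_{\ast}(\mathfrak{X}^1(F_1)) \subseteq \mathfrak{X}^1(F_2) \otimes \mathfrak{X}^0(\mathcal{M}_1)$, i.e.\ every $F_1$-vertical field is sent to an $F_2$-vertical one; this is the only step where commutativity of (\ref{Diagram}) is used in an essential way. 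I then extend $G_{\ast}$ to the free Lie algebras $\mathcal{L}(\mathfrak{X}^1(\mathcal{M}_i))$ and check descent to $\mathcal{R}(-,-)$: relations (\ref{Old}) are preserved because $G_{\ast}$ is a module map compatible with the anchors, and the relative relation (\ref{New}) is preserved precisely because $\mathfrak{X}^1(F_1)$ lands in $\mathfrak{X}^1(F_2)$, so that $[\iota(G_{\ast}\gamma), G_{\ast} x] = (G_{\ast}\gamma)(G_{\ast} x) = G_{\ast}(\gamma(x))$ holds in the target by construction of $\mathcal{R}(-,\mathfrak{X}^1(F_2))$.

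Independence of the local coordinate choices then follows by the argument of Proposition \ref{Embeddings}: any two trivializations agree on the images of $G$ and of the $F_i$, and only these images enter the definition, so the local maps glue to a global bundle map (\ref{Map}). Functoriality in (\ref{Diagram}) is immediate from the explicit local description, in which a composition of two such squares becomes a composition of coordinate projections. The principal obstacle is the book-keeping in the descent step: because the base algebras $\mathfrak{X}^0(\mathcal{M}_i)$ differ, $G_{\ast}$ is not a morphism of Lie--Rinehart algebras in the usual sense but only a bundle map over $G$, and one must track the change of base along $G^{\ast}$ carefully throughout the verification of relations (\ref{Old}) and (\ref{New}); once this is done, everything reduces to known functoriality properties of the free Lie algebra.
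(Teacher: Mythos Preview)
Your simultaneous local model $\mathcal{M}_1\cong\mathcal{N}_2\times A\times B\times C$ with all four arrows realized as coordinate projections does not exist in general: it implicitly requires $(F_1,G):\mathcal{M}_1\to\mathcal{N}_1\times_{\mathcal{N}_2}\mathcal{M}_2$ to be a submersion, which is not among the hypotheses. For a concrete failure take $\mathcal{M}_1=\mathcal{M}_2=\mathcal{N}_1=\mathbb R$, $\mathcal{N}_2=\{\ast\}$, $F_1=G=\mathrm{id}$, $F_2,H$ constant; your model would force $\dim\mathcal{M}_1\geq\dim A+\dim B=2$. Since the local model is precisely what you use to make $G_\ast$ into a map of Lie algebras (so that it extends to $\mathcal L$) and to argue coordinate-independence, the argument does not go through as written.

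The paper sidesteps this by factoring the desired map through $\mathcal{RT}_{\mathcal{N}_2}\mathcal{M}_1$ and treating the two variables separately. First one changes the base along $\mathcal{N}_1\to\mathcal{N}_2$: since $\mathfrak{X}^1(F_1)\subset\mathfrak{X}^1(H\circ F_1)$, imposing the extra relations (\ref{New}) gives a canonical surjection $\mathcal{R}(\mathfrak{X}^1(\mathcal{M}_1),\mathfrak{X}^1(F_1))\to\mathcal{R}(\mathfrak{X}^1(\mathcal{M}_1),\mathfrak{X}^1(H\circ F_1))$, and no coordinates are needed. Then one changes the total space along $G$ while keeping the base $\mathcal{N}_2$ fixed, and now only the \emph{triangle} $\mathcal{M}_1\to\mathcal{M}_2\to\mathcal{N}_2$ has to be put into coordinate-projection form; this is always possible by composing local trivializations of $G$ and $F_2$. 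Splitting into these two steps is exactly what removes the need for your four-fold simultaneous trivialization, and once you make that split the remainder of your argument (checking (\ref{Old}), (\ref{New}), and coordinate-independence) becomes the paper's argument.
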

\begin{proof} First we prove functoriality in the second variable, i.e. consider the diagram
	\begin{equation}\label{Triangle}\xymatrix{&\mathcal M_1\ar[ld]_{F_1}\ar[rd]^{F_2} &\\ \mathcal N_1\ar[rr] && \mathcal N_2}\end{equation}
It is clear that $\mathfrak X^1(F_1)<\mathfrak X^1(F_2)<\mathfrak X^1(\mathcal M_1)$ as Lie algebras, and therefore it is easy to check that there is a canonical surjective morphism of Lie-Rinehart algebras
	$$\mathcal R(\mathfrak X^1(\mathcal M_1),\mathfrak X^1(F_1))\rightarrow\mathcal R(\mathfrak X^1(\mathcal M_1),\mathfrak X^1(F_2)),$$
and hence a morphism of bundles $\mathcal RT_{\mathcal N_1}\mathcal M_1\rightarrow\mathcal RT_{\mathcal N_2}\mathcal M_1$, that is obviously functorial in (\ref{Triangle}).

\smallskip

Now we prove functoriality in the first variable. Choosing local coordinates we can represent $F_1$ as the projection $\mathbb R^{k+n}=\mathbb R^k\times\mathbb R^n\rightarrow\mathbb R^n$, and  then every element of $\mathcal R(\mathfrak X^1(\mathcal M_1),\mathfrak X^1(F_1))$ can be written as follows:
	\begin{equation}\underset{i=1}{\overset{\infty}\Sigma}f_{j_1,\ldots,j_i}[\partial_{j_1},[\ldots[\partial_{j_{i-1}},\partial_{j_i}]\ldots],\end{equation}
where the sum is finite, and $i>1$ implies that $f_{j_1,\ldots,j_i}=0$ if at least one of $\partial_j$'s is tangent to the fibers of $F_1$.

Consider the following diagram
	\begin{equation}\label{FirstVar}\xymatrix{\mathcal M_1\ar[rr]\ar[rd]_{F_1} && \mathcal M_2\ar[ld]^{F_2}\\ & \mathcal N_2 &}\end{equation}
Since all maps are locally trivial bundles, we can choose local coordinates in a compatible way, i.e. locally (\ref{FirstVar}) becomes 
	\begin{equation}\xymatrix{\mathbb R^{n+m_1+m_2}\ar[rr]\ar[rd]_{F_1} && \mathbb R^{n+m_2}\ar[ld]^{F_2}\\ & \mathbb R^n &}\end{equation}
and it is clear how to define the map $\mathcal RT_{\mathcal N_2}\mathcal M_1\rightarrow\mathcal RT_{\mathcal N_2}\mathcal M_2$ locally. Functoriality and independence of the choice of coordinates are easy to check.\end{proof}

\section{The full infinitesimal groupoid}\label{Groupoid}

In this section we use the relatively free Lie-Rinehart construction of the previous section to define the sequence $\{\mathbb T^k\mathcal M\}_{k\geq 0}$ of iterated bundles of spaces of infinitesimal paths on a manifold $\mathcal M$. In particular we get a semi-simplicial structure on $\{\mathbb T^k\mathcal M\}_{k\geq 1}$. 

For $k\geq 1$ we define $\mathbb X^k$ to be the set of sections $\mathcal M\rightarrow\mathbb T^k\mathcal M$, and $\mathbb X^0:=\mathfrak X^0(\mathcal M)$. We show how every $\nu\in\mathbb X^k$, $k\geq 1$, can be decomposed into a sequence $\{\alpha_\phi\}$, with $\alpha_\phi\in\mathbb X^1$, and $\phi$ running over non-empty subsets of $\{0,\ldots,k-1\}$. This allows us to define a rich algebraic structure on $\mathbb X^*:=\{\mathbb X^k\}_{k\geq 0}$, and we call it {\it the full infinitesimal groupoid} of $\mathcal M$.

Finally we show that the non-linear cohomology of $\mathbb X^*$ is the algebra of polyvector fields on $\mathcal M$.

\smallskip

\begin{definition} Let $\mathcal M$ be a manifold. We define a sequence of (in general non-linear) locally trivial bundles $\{\pi_k:\mathbb T^k\mathcal M\rightarrow\mathcal M\}_{k\geq 0}$ as follows: $\mathbb T^0\mathcal M:=\mathcal M$, if we have defined $\pi_k:\mathbb T^k\mathcal M\rightarrow\mathcal M$, then 
	\begin{equation}\mathbb T^{k+1}\mathcal M:=\mathcal RT_\mathcal M(\mathbb T^k\mathcal M),\end{equation}
and the projection $\pi_{k+1}:\mathbb T^{k+1}\mathcal M\rightarrow\mathcal M$ is obvious.\end{definition}

Notice that, just like $\{T^k\mathcal M\}_{k\geq 1}$, the sequence $\{\mathbb T^k\mathcal M\}_{k\geq 1}$ has a semi-simplicial structure, i.e. for each $k\geq 1$ there are $k$ projections 
	$$\pi_{k,i}:\mathbb T^k\mathcal M\rightarrow\mathbb T^{k-1}\mathcal M,\quad 0\leq i\leq k-1.$$ 
When $i=0$ $\pi_{k,0}$ is the projection of the bundle $\mathcal RT_\mathcal M(\mathbb T^{k-1}\mathcal M)$ on $\mathbb T^{k-1}\mathcal M$, when $i>0$ $\pi_{k,i}$ is obtained from $\pi_{k-i,0}$ by functoriality of $\mathcal RT$. 

Moreover, each $\pi_{k,i}$ is a linear bundle. This is a general fact: consider a morphism of locally trivial bundles
	$$\xymatrix{\mathcal M_1\ar[rd]_{\pi_1}\ar[rr]^F && \mathcal M_2\ar[ld]^{\pi_2}\\ & \mathcal B &}$$
s.t. $F$ is a linear bundle. By functoriality of $\mathcal RT$ we have the diagram
	\begin{equation}\xymatrix{\mathcal RT_\mathcal B\mathcal M_1\ar[rd]_{\mathcal RT_\mathcal B(\pi_1)}\ar[rr]^{\mathcal RT_\mathcal B(F)} && \mathcal RT_\mathcal B\mathcal N\ar[ld]^{\mathcal RT_\mathcal B(\pi_2)}\\ & \mathcal RT_		\mathcal B\mathcal B &}\end{equation}
It is easy to see that fibers of $\mathcal RT_\mathcal B(\pi_1)$ are the tangent bundles to the fibers of $\pi_1$, and similarly for $\pi_2$, and therefore fibers of $\mathcal RT_\mathcal B(F)$ are the same as fibers of $T(F)$. It is well known (e.g. \cite{MK05}) that the latter is a linear bundle, when $F$ is.

\smallskip

Let $\mathbb X^0:=\mathfrak X^0(\mathcal M)$, and let $\mathbb X^k$ be the set of sections of $\mathbb T^k\mathcal M\rightarrow\mathcal M$. Just like with $\mathfrak X^k$, using functoriality of $\mathcal RT$ with respect to local embeddings (proposition \ref{Embeddings}), we obtain a decomposition of each $\nu\in\mathbb X^k$ into a set $\{\alpha_\phi\}$, where each $\alpha\in\mathbb X^1$, and $\phi$ runs over all non-empty subsets of $\{1,\ldots,k\}$.

As with $\mathfrak X^k$, for $\nu\in\mathbb X^k$ the decomposition into $\{\alpha_\phi\}$ depends on the order on the set of projections of $\nu$ on $\mathbb X^1$. Also here we have an action of the symmetric group $\mathbb S_k$ given as in (\ref{TheAction}), but with the Lie bracket substituted by the free bracket $\llbracket-,-\rrbracket$ on $\mathbb X^1=\mathcal RT(\mathfrak X^1)$.

It is clear how to extend additions, strong differences, the cup product, and the composition product from $\{\mathfrak X^k\}$ to $\{\mathbb X^k\}$, and we will freely use the notation of section \ref{kVectors}. We will call $\mathbb X^*:=\{\mathbb X^k\}$ together with these (and other) operations {\bf the full infinitesimal groupoid} of $\mathcal M$. 

\smallskip

In addition to the operations listed above, $\mathbb X^*$ has {\bf homotopy operations}, defined as maps $\{\mathbb X^k\overset{h^k_{ij}}\rightarrow\mathbb X^{k-1}\}_{0\leq i<j\leq k-1}$ for each $k\geq 2$.

To define the homotopy operations we notice that $\mathbb X^1$ has actually two Lie structures. They come from two Lie structures on $\mathcal L(\mathfrak X^1)$. The first one is the free bracket $\llbracket-,-\rrbracket$ given by $\mathcal L$, and the other is the Lie bracket $[-,-]$ on $\mathfrak X^1$, extended to $\mathcal L(\mathfrak X^1)$ by the requirement that 
	$$[x,\llbracket y,z\rrbracket]=\llbracket[x,y],z\rrbracket+\llbracket y,[x,z]\rrbracket.$$
It is easy to check that the kernel of $\mathcal L(\mathfrak X^1)\rightarrow\mathcal R(\mathfrak X^1)$ is a Lie ideal also for $[-,-]$, and hence $\mathbb X^1$ inherits $[-,-]$.

\smallskip

Now, having an additional bracket on $\mathbb X^1$, we have an additional action of $\mathbb S_k$ on $\mathbb X^k$, written in terms of $\{\alpha_\phi\}$, $\alpha_\phi\in\mathbb X^1$. That is, we apply the same formula (\ref{TheAction}), but instead of the free bracket we use $[-,-]$.

For $k\geq 2$ let $\nu\in\mathbb X^k$, and let $\sigma_{ij}\in\mathbb S_k$ be the swapping of $i$ and $j$. Let $\mu\in\mathbb X^{k-2}$ be the projection of $\nu$ on the $\phi$-face, where $\phi=\{0,\ldots,\widehat{i},\ldots,\widehat{j},\ldots,k-1\}$  (if $k=2$ $\mu$ is just $\mathcal M$ itself). Clearly $\mu$ is also projection of $\sigma_{ij}(\nu),\sigma_{ij}'(\nu)$, where $\sigma_{ij}$ acts using $\llbracket-,-\rrbracket$, and $\sigma'_{ij}$ acts using $[-,-]$.

As $2$-vector fields on the image of $\mu$ in $\mathbb T^{k-2}\mathcal M$, $\sigma_{ij}(\nu),\sigma_{ij}'(\nu)$ have the same boundaries, and we can take their strong difference. It is an element of $\mathbb X^{k-1}$, and we define $h^k_{ij}(\nu)$ to be this element. A straightforward but long computation shows that modulo homotopies $h^{<k}_{**}$, $h^k_{ij}$ is well defined with respect to the actions of symmetric groups, i.e. for any $\sigma\in\mathbb S_k$ there is $\tau\in\mathbb S_{k-1}$ s.t. 
	$$h^k_{\sigma(i)\sigma(j)}(\sigma\nu)\sim\tau(h^k_{ij}(\nu)).$$
	
\smallskip

The equivalence relation on $\mathbb X^{k-1}$, defined by $h^k_{ij}(\nu)$ is the following: for $\xi,\xi'\in\mathbb X^{k-1}$ $\xi\sim\xi'$ if $\xi-_\phi\xi'=h^k_{ij}(\nu)$. Consequently, it is natural to say that $\nu\in\mathbb X^k$ defines {\bf trivial homotopies}, if $\sigma_{ij}(\nu)=\sigma'_{ij}(\nu)$ for any $0\leq i<j\leq k-1$, i.e. if $h^k_{ij}(\nu)$ consists of trivial vectors on the $\phi$-face of $\nu$.

\smallskip

What do we get if we take the subset of $\mathbb X^*$, consisting of elements, that define trivial homotopies, and divide it by the homotopy equivalence? We denote the result by $\mathcal H(\mathbb X^*)$ and call it {\bf the cohomology of $\mathbb X^*$.}

\smallskip

First of all it is clear that any $\nu\in\mathbb X^k$ is equivalent to some $\nu'\in\mathfrak X^k$, and no two $\nu',\nu''\in\mathfrak X^k$ are equivalent. 

Secondly, a $\{\alpha_\phi\}=\nu\in\mathfrak X^k$ defines trivial homotopies if and only if for any $\phi,\psi\subseteq\{0,\ldots,k-1\}$, s.t. $\phi\cap\psi=\emptyset$ we have that either $\alpha_\phi=\alpha_\psi$ or at least one of $\alpha_\phi,\alpha_\psi$ is $0$. This is just the condition for $\llbracket\alpha_\phi,\alpha_\psi\rrbracket=0$. 

There are quite many such $\nu$'s, but not that many if we discard degenerate $k$-submanifolds and divide by reparametrizations. Note, that taking the canonical zero section, we can consider any $\alpha\in\mathfrak X^1$ as a section of $T^2\mathcal M$, i.e. we take $\alpha\cup 0$, and this is obviously a degenerate $2$-vector field. Also $\alpha\cup\alpha$ is degenerate, since it is the jet of a $1$-dimensional submanifold. Together we have 
	$$\alpha\sim\alpha\cup 0\sim\alpha\cup\alpha\sim 0\cup\alpha,$$
In addition, for $k\geq 2$ we have reparametrizations of $k$-vectors, i.e. maps $\mathcal W_k\rightarrow\mathcal W_k$, that have $1$ as their Jacobian. 

Discarding degenerate fields implies that any $\nu\in\mathfrak X^*$, that defines trivial homotopies, is equivalent to some $\nu'=\{\alpha'_\phi\}$, s.t. $\alpha'_\phi=0$, unless $\phi=\{0,\ldots,i\}$ for some $i\leq k-1$. Dividing by reparametrizations means that $\nu'$ is linear over $\mathfrak X^0$ in each one of $\alpha'_\phi$'s, and discarding degenerate fields again we get that $\nu'$ is an element of some alternating power of $\mathfrak X^1$ over $\mathfrak X^0$, i.e. $\mathcal H(\mathbb X^*)$ is the set of decomposable elements of $\wedge^*\mathfrak X^1$. Taking the $\mathfrak X^0$-module generated by $\mathcal H(\mathbb X^*)$ we get all of $\wedge^*\mathfrak X^1$.

\smallskip

Finally we discuss algebraic operations on $\mathcal H(\mathbb X^*)$. On $\mathbb X^*$ we have some additions, cup product, and the composition product. Additions translate to the addition on $\wedge^*\mathfrak X^1$, and the cup product becomes the wedge product. 

With the composition product it is not as simple. Recall how one defines the Lie derivative of a vector field $\alpha$ along another vector field $\beta$: one takes the composition product $\alpha\times\beta\in\mathfrak X^2$, uses trivialization of $T\mathcal M$ over the integral curves of $\beta$ to find the projection of $\alpha\times\beta$ on the fibers of $T\mathcal M$, and then uses the linear structure on these fibers to identify tangents to fibers with their points. The resulting section of $T\mathcal M$ is $[\beta,\alpha]$.

In terms of higher categories this is what is called {\bf a thin structure}, i.e. $\alpha\times\beta$ is not the composition, but one of its faces is. Since $\mathcal H(\mathbb X^*)$ is not additive, but multi-linear, we need not one face but many, and the resulting operation is the Schouten bracket.\footnote{Composition product is also a thin structure on all of $\mathbb X^*$, but its algebra of faces is beyond the scope of this paper.}

\smallskip

We would like to stress that $\wedge^*\mathfrak X^1$, obtained as above from $\mathcal H(\mathbb X^*)$, is not a Gerstenhaber algebra. Elements of $\wedge^k\mathfrak X^1$ represent $k$-morphisms, and hence sit in degree $-k+1$, if we use cohomological notation. Therefore, while the bracket is of degree $0$, the cup product is of degree $-1$.

\end{document}